\newcommand{\PreserveBackslash}[1]{\let\temp=\\#1\let\\=\temp}
\newcolumntype{C}[1]{>{\PreserveBackslash\centering}p{#1}}
\newcolumntype{R}[1]{>{\PreserveBackslash\raggedleft}p{#1}}
\newcolumntype{L}[1]{>{\PreserveBackslash\raggedright}p{#1}}
\newtheorem{thm}{Theorem}
\newtheorem{lem}[thm]{Lemma}
\newtheorem*{thm*}{Theorem}
\newtheorem{ass}{Assumption}
\theoremstyle{remark}
\newtheorem{rem}[thm]{Remark}
\theoremstyle{definition}
\newtheorem{example}{Example}
\newcommand{\E}{\mathbb{E}}
\newcommand{\D}{{\rm d}}
\newcommand{\dd}{{\partial}}
\newcommand{\bP}{{\mathbf P}}
\newcommand{\bE}{{\mathbf E}}
\newcommand{\ind}{{\mathbf 1}}
\begin{document}

\title{Linear and uniform in time bound for the binary branching model with Moran type interactions}
\author{A. M. G. Cox\footnote{Department of Mathematical Sciences, University of Bath, Claverton Down, Bath, BA2 7AY, UK. Email: \texttt{a.m.g.cox@bath.ac.uk}} \and E. Horton\footnote{Department of Statistics, University of Warwick, Coventry, CV4 7AL. Email: \texttt{emma.horton@warwick.ac.uk}} \and D. Villemonais\footnote{Université de Strasbourg, IRMA, Strasbourg, France. Email: \texttt{denis.villemonais@unistra.fr}}\ \footnote{Institut Universitaire de France}}

\maketitle

\begin{abstract}
In this note, we recall the definition of the binary branching model with Moran type interactions (BBMMI) introduced in \cite{CHV2024}. In this interacting particle system, particles evolve, reproduce and die independently and, with a probability that may depend on the configuration of the whole system, the death of a particle may trigger the reproduction of another particle, while a branching event may trigger the death of another particle. We recall its relation to the Feynman-Kac semigroup of the underlying Markov evolution and improve on the $L^2$ distance between their normalisations proved in \cite{CHV2024}, when additional regularity is assumed on the process.
\newline

\noindent{\it Keywords} : interacting particle systems, branching processes, many-to-one, Markov processes, Brownian motion with drift, Moran model.
\newline

\noindent{\it  MSC:} 82C22, 82C80, 65C05, 60J25, 92D25, 60J80.
\end{abstract}

%\tableofcontents

\section{Introduction}
Branching processes and Moran-type models represent two distinct but complementary approaches to studying population dynamics and related phenomena. Moran-type processes, introduced by Moran~\cite{Moran1958}, are particularly suited for modeling finite populations influenced by mechanisms such as genetic drift, mutation, and natural selection, which can either enhance or diminish genetic diversity. The Moran model describes a system of \(N\) genes where, at random exponential intervals, two particles are chosen uniformly: one is removed while the other is duplicated, breaking the independence between particles. For an in-depth exploration of this model and its generalisations, we refer the reader to~\cite{Etheridge2011} and references therein. Furthermore, this resampling approach has been employed in various models of particle systems for the numerical solution of Feynman-Kac formulae~\cite{CloezCorujo2021,del2000moran,del2000branching,Villemonais2014}.

On the other hand, branching processes naturally model systems where events such as branching and killing occur independently. These processes arise in contexts such as population size dynamics~\cite{Jagers1989,Jagersothers1995,Lambert2005}, neutron transport~\cite{CoxHarrisEtAl2019}, genetic evolution~\cite{Marshall2008}, growth-fragmentation phenomena~\cite{bertoin2017markovian,bertoin2019feynman}, and cell proliferation kinetics~\cite{Yanev2010}. They are also studied for their theoretical properties~\cite{Jagers1989,Dawson1993,IkedaNagasawaEtAl1968,IkedaNagasawaEtAl1968a,IkedaNagasawaEtAl1969}, with a particular focus on their multiplicative behavior, scaling properties, and asymptotic dynamics over long time scales.

In~\cite{CHV2024}, a new model has been proposed, which encompasses both the Moran model and binary branching processes. In this article, the authors consider a particle system with (natural) branching and killing, as well as Moran type interactions. More precisely, when the system is initiated from $N$ particles, each particle evolves according to an independent copy of a given Markov process, $X$, until either a (binary) branching or killing event occurs.  Here, binary refers to the fact that the particle is replaced by exactly two independent copies of itself.  If such a branching event occurs, with a probability that may depend on the configuration of the whole system, another particle is removed from the system according to a selection mechanism. Similarly, if a killing event occurs, with a probability which may also depend on the configuration of the whole system, another particle is duplicated via a resampling mechanism. We refer to this model as the {\it \color{black} binary branching model with Moran interactions}, or BBMMI for short.

In the present paper, we will, for simplicity, only consider the so called $N_{min}-N_{max}$ model, which is the BBMMI model with a particular choice of selection and resampling mechanisms. Indeed, the $N_{min}-N_{max}$ model is a binary branching process whose population size is constrained to remain in $\{N_{min},\ldots,N_{max}\}$, where $2\leq N_{min}\leq N_{max}<+\infty$ are fixed. In order to constrain the size of the process, when the size of the population reaches $N_{max}$ (resp. $N_{min}$) and a natural branching (resp. killing) event occurs, we set the probability of selection (resp. resampling) to be $1$. As will be clear, our results extend to the more general situations under the appropriate regularity assumptions.

The main contributions of~\cite{CHV2024} are two-fold. First, an explicit relation between the average of the empirical distribution of the particle system and the average of the underlying Markov process $X$. Letting $m_T$ denote the empirical distribution of the particle system at time $T$, and $Q_T$ denote the first moment of the underlying binary branching process without selection and resampling, the authors show that for any $T \ge 0$,
\begin{equation}
\E\left[\Pi_T^{A}\,\Pi_T^{B} m_T(f)\right] = 
m_0Q_T(f),
% \bE_{m_0}\left[f(X_{t})\exp\left(\int_0^{t} b(X_{s}) - \kappa(X_{s})\,\D s\right)\mathbf{1}_{t<\tau_\partial}\right],
\label{M21-intro}
\end{equation}
where $\Pi_T^A$ and $\Pi_T^B$ are stochastic weights that compensate for the resampling and selection events that occur up to time $T$. Second, that after normalisation, we can give explicit bounds on the difference between the empirical particle system and the corresponding semigroup:
\begin{equation}
\left\|\frac{m_0Q_T(f)}{m_0 Q_T(\mathbf{1}_E)}- \frac{m_T(f)}{m_T(\mathbf{1}_E)}\right\|_2\leq C\,\exp(c\|b\|_\infty T) \frac{\|f\|_\infty}{m_0 Q_T(\mathbf 1_E)/{N_0}}\,\frac{1}{\sqrt{N_0}},
\label{L2bound}
\end{equation}	
where $m_0(\mathbf 1_E) = N_0$ and $C,c$ are positive constants.

As the reader will notice, the above bound is exponential in $T$, which is partly due to the generality of the setting considered in \cite{CHV2024}. The aim of the present note is to state and prove that, under suitable regularity condition, this bound can be chosen linear in $T$. 

We also mention that bounds for $L^2$ norms of the form given above have been studied in detail for interacting particle systems with constant size. We refer the reader to \cite{DelMoral2004, DelMoral2013, Rousset2006, Journel2024} and references therein for further details. 

The rest of the article is set out as follows. In section~\ref{sec:descr}, we recall the $N_{min}-N_{max}$ model introduced in \cite{CHV2024}, along with some useful notation that will be used throughout the rest of the article. In section~\ref{sec:main}, we give our main result that strengthens the bound~\eqref{L2bound} obtained in \cite{CHV2024}, followed by a discussion of the implications of this result. Finally, in section~\ref{sec:BMdrift} we discuss the case of Brownian motion with drift evolving in a bounded domain with $C^2$ boundary. The purpose of this section is to prove that the $L^2$ distance between the (normalised) semigroup associated with the branching Brownian motion with drift and the approximating $N_{min}-N_{max}$ model can be optimally bounded by $C/\sqrt{N_{min}}$. %Moreover, we also extend well-known results for Fleming-Viot interacting particle systems in this context to the $N_{min}-N_{max}$ particle system. 
% This result actually improves known bounds for the Fleming-Viot interacting particle systems (where $N_{min}=N_{max}$ and the branching rate is set to $0$). 

\section{Description of the model}
\label{sec:descr}
Let $(\Omega,\mathcal F,(X_t)_{t\in[0,+\infty)})$ be a continuous time progressively measurable Markov process with values in a measurable state space $E$.
We denote by $\bP_x$ its law when initiated at $x \in E$ and by $\bE_x$ the corresponding expectation operator. We also allow for the possibility that the Markov process $X$ is absorbed, or killed, in the sense that we consider a cemetery state $\partial \notin E$ such that $X_t \in \{\partial\}$ for all $t \ge \tau_\partial := \inf\{t \ge 0 : X_t \in \{\partial\}\}$. We extend, whenever necessary, any measurable function $f:E\to[0,+\infty)$ by $f\equiv 0$ on $\dd$. We call this `hard killing' to distinguish with the notion of `soft killing' introduced below.

We further introduce functions $b:E\to\mathbb R_+$ and $\kappa:E\to\mathbb R_+$, that denote the branching and (soft) killing rate of the Markov process. With this notation, we introduce the semigroup
\begin{align*}
	Q_t f(x)&=\bE_x\left[f(X_{t})\exp\left(\int_0^{t} (b(X_{s})-\kappa(X_s))\,\D s\right)\ind_{t<\tau_\partial}\right],
\end{align*}
defined for all bounded measurable functions $f:E\to\mathbb{R}$, $t\geq 0$ and $x\in E$,
 This defines a Feynman-Kac semigroup $(Q_t)_{t\geq 0}$, which is related to the binary branching model where particles move as copies of $X$ that are killed at rate $\kappa$ and branch at rate $b$ resulting in the creation of two independent copies of the original particle. The relation between $Q$ and this process is given by the well-known many-to-one formula, see for instance~\cite{HarrisRobertsEtAl2017} and references therein, and it has been extended to the BBMMI in~\cite{CHV2024}.
 %The purpose of this short article is to improve on the bound of the $L^2$ distance between renormalised versions of $Q_T$ and $m_T$. 

Before describing the interacting particle system associated with the branching process described above, we first introduce the following assumptions.

\begin{ass}
The branching rate $b$ is uniformly bounded. 
\end{ass}
\begin{ass}
For any $x\in E$ and $t\in [0,+\infty)$, $\bP_x(\tau_\dd=t)=0$ and $\inf_{x\in E}\bP_x(\tau_\dd>t)>0$. 
\end{ass}

We now recall the algorithmic description of the dynamics of the BBMMI particle system in the particular setting of the $N_{min}-N_{max}$ model with branching and killing rates $b$ and $\kappa$. The formal construction of the process is a non-trivial task, and is given in the supplementary material~\cite{BBMMI-sup} of~\cite{CHV2024}.
To this end, let $2\leq N_{min}\leq N_{max}<\infty$, and fix ${N}_0\in\{N_{min},\ldots,N_{max}\}$. We consider the particle system  $((X^i_t)_{i\in N_t})_{t\in[0,+\infty)}$, where $N_t$ is the number of particles in the system at time $t$.

\medskip\noindent\textbf{Evolution of the $N_{min}-N_{max}$ model.}\label{Alg1} 
\begin{enumerate}
	\item The particles $X^i$, $i\in\{1,\ldots, N_0\}$, evolve as independent copies of $X$, and we consider  the following times: 
	\begin{align*}
	\tau^{b,i}_1:=\inf\{t\geq 0,\ \int_0^t b(X^i_s)\,ds\geq e_1^{i, b}\},
	\end{align*}
    and
    \begin{align*}
    \tau^{\kappa,i}_1:=\inf\{t\geq 0,\ \int_0^t \kappa(X^i_s)\,ds\geq e_1^{i, \kappa}\},
    \end{align*}
    and
	\begin{align*}
	\tau^{\dd,i}_1:=\inf\{t\geq 0,\ X^{i}_t\in \dd\},
	\end{align*}
	 where $e_1^{i, b},e_1^{i, \kappa}$, $i = 1, \dots,{N}_0$ are exponential random variables with parameter $1$, and are independent of each other and everything else.
	 \item Denoting by $i_0$ the index of the (unique) particle such that $\tau^{b,i_0}_1\wedge \tau^{\kappa,i_0}_1\wedge \tau^{\partial,i_0}_1=\tau_1$, where $\tau_1=\min_{i\in \bar S_0} \tau^{b,i}_1\wedge \tau^{\kappa,i}_1\wedge \tau^{\partial,i}_1$, we have $N_t=N_0$ for all $t<\tau_1$ and
	 \begin{enumerate}
        \item if $\tau_1=\tau^{b,i_0}_1$, then a \textit{branching event} occurs;% : a $\bar N_0+1$-th particle is added to the system at the same position as $X^{0,i_0}_{\tau_0}$;
        \item if $\tau_1=\tau^{\kappa,i_0}_1$, then a \textit{soft killing event} occurs;% : the $i_0^{th}$ particle is  removed from the system;
	 	\item if $\tau_1=\tau^{\dd,i_0}_1$, then a \textit{hard killing event} occurs.% : the $i_0^{th}$ particle is  removed from the system.	 	
	 \end{enumerate}
	 \item Then a resampling or selection event may occur, depending on the following situations.
	 \begin{enumerate}
	 
	 	\item[] {\bf Killing.} If a (soft or hard) killing event occurred at the preceding step, then we say that $i_0$ is \textit{killed} at time $\tau_1$ and we consider the following further two cases. 
         \begin{itemize}
             \item If the total number of particles, $N_0$, is equal to $N_{min}$, particle $i_0$ is removed from the system and a \textit{resampling event} occurs: choose $j_0$ uniformly from $\{1,\ldots,N_0\}\setminus\{i_0\}$ and set
             \[
             X^{i_0}_{\tau_1}:=X^{j_0}_{\tau_1}.
             \]
             Observe that the number of particles in the system at time $\tau_1$ is then $N_{\tau_1}=N_{0}=N_{min}$.
             \item If the total number of particles, $N_0$, is larger or equal to $N_{min}+1$, then the particle $i_0$ is removed from the system and the particles are then enumerated arbitrarily from $1$ to $N_{\tau_1}=N_0-1$.
         \end{itemize} 
	 	\item[] {\bf Branching.} If a branching event occurred at the preceding step, then we say that $i_0$ has \textit{branched} at time $\tau_1$ and we consider the following further two cases.
         \begin{itemize}
             \item If the total number of particles, $N_0$, is equal to $N_{max}$, a new particle is added to the system at position $X^{i_0}_{\tau_1}$ and a \textit{selection event} occurs: choose $j_0$ at random uniformly from $\{1, \dots, N_0 + 1\}$ and remove particle  $j_0$ from the system 
             %\[
             %X^{N_0+1}_{\tau_1}=X^{i_0}_{\tau_1}.%\text{ and }\bar S_{\tau_1}:=\{\max \bar S_0+1\}\cup \bar S_0\setminus\{j_0\};
             %\]
             The particles are then enumerated arbitrarily from $1$ to $N_{\tau_1}=N_0$.
             %; we say that $j_0$ is \textit{removed} at time $\tau_1$ and that $\max S_0+1$ is \textit{born} at time $\tau_1$;
             \item  If the total number of particles, $N_0$, is less than or equal to $N_{max}-1$, then a new particle is added to the system at position $X^{i_0}_{\tau_1}$:
              \[
             X^{N_0+1}_{\tau_1}=X^{i_0}_{\tau_1}.
             \]
         \end{itemize}
	 \end{enumerate}
\end{enumerate}
After time $\tau_1$ the system evolves as independent copies of $X$ until the next killing/branching event, denoted by $\tau_2$, and at time $\tau_2$ it may undergo a resampling/selection event as above. Iterating, we define the sequence $\tau_0:=0<\tau_1<\tau_2<\cdots<\tau_n<\cdots$.

We will also make use of the following assumption, which ensures that the process described above is well defined at any time $t\geq 0$.

\begin{ass}
The sequence $(\tau_n)_{n\in\mathbb{N}}$ converges to $+\infty$ almost surely.   
\end{ass}

\begin{figure}[H]
    \centering
    \includegraphics[width=14cm]{./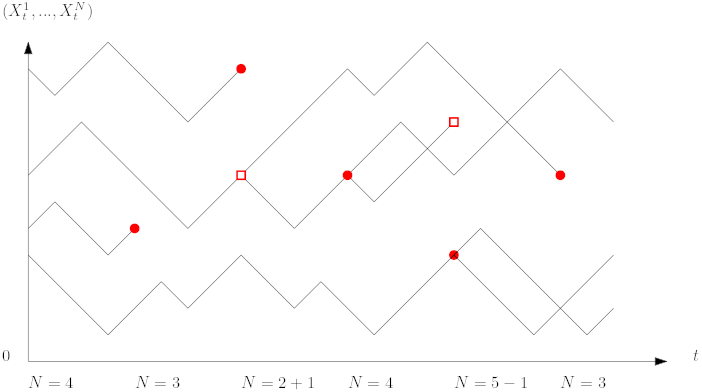}
    \caption{A schematic representation of the $N_{min}-N_{max}$ dynamic with $N_{min}=3$ and $N_{max}=4$. The process starts with $N=4$ particles at time $0$. The first event is a killing, so that the number of particles goes down to $N=3= N_{min}$. The next event is a killing, so that the number of particles goes down to $2<N_{min}$, which triggers a resampling event: one of the $2$ remaining particles (chosen uniformly at random) is duplicated, and the number of particles goes back to $N=2+1=N_{min}$. The next event is a branching, so that the number of particles goes up to $N=4= N_{max}$. The subsequent event is a branching event, so that the number of particles goes up to $5>N_{max}$, which triggers a selection event, so that one of the $5$ particles (chosen uniformly at random) is removed from the system, and the number of particles goes back to $N=4=N_{max}$. The next event  is a killing, so that the number of particles goes down to $3= N_{min}$, and so on.}
    \label{fig:NminNmax}
\end{figure}

As discussed in \cite{CHV2024}, the above model and the associated results given in \cite{CHV2024} are related to a whole suite of other models in the literature. For example, when $N_{min}=N_{max}$ and $\kappa$ bounded, we recover the standard Moran particle model (see~\cite{DelMoralMiclo2002, del2000branching,Rousset2006,CloezCorujo2021} for similar results), where the process is constrained to remain of constant size $N_0$. 

In addition, our model is reminiscent of the genetic algorithms introduced by Del Moral, see~\cite{DelMoral2004,DelMoral2013} and references therein, and also fits into the more general class of controlled branching processes introduced by Sevastyanov and Zubkov in \cite{SevastyanovZubkov1974}, where the number of reproductive individuals in each generation depends on the size of the previous generation via a control function. We refer the reader to \cite{CHV2024} for further discussion and references on these related works. 

In the rest of the article we will use the notation 
\[
  m_t := \sum_{i = 1}^{N_t}\delta_{X_t^i}, \quad t \ge 0,
\]
to denote the empirical measure associated with the $N_{min}-N_{max}$ interacting particle system. We will also use $\hat m_t$ to denote the normalised empirical measure:
\[
  \hat m_t := \frac{1}{N_t}\sum_{i = 1}^{N_t}\delta_{X_t^i}, \quad t \ge 0.
\]
The law of the particle system will be denoted by $\mathbb P$, with corresponding expectation operator~$\mathbb E$.

\section{Main result}\label{sec:main}
In this section, we present our main result, which improves on the upper bound given in Theorem 1 of \cite{CHV2024}. For this, we set
\begin{equation}
\label{eq:def-h}
h(x)=\inf_{t\geq 1} \frac{\delta_x Q_t\ind_E}{\|Q_{t-1}\ind_E\|_\infty},\ \forall x\in E
\end{equation}
and, for all bounded measurable functions $f:E\to \mathbb R$,
\begin{equation}
\label{eq:def-alpha}
\alpha_t(f)=\sup_{x\in E} \left|\frac{\delta_x Q_t f}{\delta_x Q_t\ind_E}-\nu_t(f)\right|,\ \forall \geq 0,
\end{equation}
where $\nu_t$ is a (well chosen) probability measure over $E$. 
% We also define
% \begin{align*}
%     \hat m_t=\frac1{N_t}\sum_{i=0}^{N_t} \delta_{X^i_t},\ \forall t\geq 0.
% \end{align*}
We also recall that $Q_t$, $m_t$ and $\hat m_t$, $t \ge 0$ were defined in the previous section. We will also use the notation $\mathcal M_1(E)$ to denote the collection of probability measures on $E$.

\begin{thm}\label{thm:main-result}
Under Assumptions~1,~2 and ~3, there exists a constant\footnote{Here and throughout the paper, $C$ is a positive constant that may change from line to line} $C>0$ such that, for all $T\geq 1$ and all bounded measurable functions $f:E\to \mathbb R$, we have
\begin{align}
    \label{eq:main-result}
    \left\|\frac{\hat m_0 Q_T f}{\hat m_0 Q_T \mathbf{1}_E} - \hat m_T(f)\right\|_2
    &\leq C \sum_{s=0}^{T-1} \alpha_{T-s-1}(f)\mathbb E\left(\frac{1}{\sqrt{N_{s}}\hat m_s(h)}\right).
\end{align}
\end{thm}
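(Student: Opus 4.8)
The plan is to compare $\hat m_T(f)$ with the target $\hat m_0 Q_T f/(\hat m_0 Q_T\mathbf 1_E)$ by interpolating over the integer times $s=0,1,\dots,T$. For $0\le s\le T$ set
\[
\Phi_s:=\frac{\hat m_s Q_{T-s} f}{\hat m_s Q_{T-s}\mathbf 1_E},
\]
so that $\Phi_0$ is the left-hand target and $\Phi_T=\hat m_T(f)$ (using $\hat m_T(\mathbf 1_E)=1$). First I would write the difference as the telescoping sum $\Phi_T-\Phi_0=\sum_{s=0}^{T-1}(\Phi_{s+1}-\Phi_s)$ and apply Minkowski's inequality to reduce the claim to the per-step estimate $\|\Phi_{s+1}-\Phi_s\|_2\le C\,\alpha_{T-s-1}(f)\,\mathbb E(1/(\sqrt{N_s}\,\hat m_s(h)))$. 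The semigroup identity $Q_{T-s}=Q_1Q_{T-s-1}$ is what makes consecutive terms comparable: writing $\phi:=Q_{T-s-1}f$ and $\psi:=Q_{T-s-1}\mathbf 1_E$, we have $\Phi_{s+1}=\hat m_{s+1}(\phi)/\hat m_{s+1}(\psi)$ and $\Phi_s=\hat m_s Q_1\phi/(\hat m_s Q_1\psi)$, so the increment only records how the empirical measure is transported over one unit of time.

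The role of $\alpha$ comes from centering. For the fixed scalar $\nu:=\nu_{T-s-1}(f)$, definition \eqref{eq:def-alpha} gives the pointwise bound $|\phi-\nu\psi|\le\alpha_{T-s-1}(f)\,\psi$ on $E$. I would therefore replace $\phi$ by $\tilde\phi:=\phi-\nu\psi$ in both ratios, which leaves $\Phi_{s+1}-\Phi_s$ unchanged since subtracting $\nu$ from each ratio cancels in the difference, and then use the elementary identity
\[
\frac{\hat m_{s+1}(\tilde\phi)}{\hat m_{s+1}(\psi)}-\frac{\hat m_s Q_1\tilde\phi}{\hat m_s Q_1\psi}
=\frac{\hat m_{s+1}(\tilde\phi)-\hat m_s Q_1\tilde\phi}{\hat m_s Q_1\psi}
+\frac{\hat m_{s+1}(\tilde\phi)}{\hat m_{s+1}(\psi)}\cdot\frac{\hat m_s Q_1\psi-\hat m_{s+1}(\psi)}{\hat m_s Q_1\psi}.
\]
Because $|\tilde\phi|\le\alpha_{T-s-1}(f)\psi$, the prefactor $|\hat m_{s+1}(\tilde\phi)/\hat m_{s+1}(\psi)|$ is at most $\alpha_{T-s-1}(f)$, so $\Phi_{s+1}-\Phi_s$ is bounded by $\alpha_{T-s-1}(f)$ times the one-step fluctuations of $\hat m(\psi)$ and of $\hat m(\tilde\phi/\alpha_{T-s-1}(f))$, both of functions dominated by $\psi$, divided by $\hat m_s Q_1\psi$. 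For this denominator I would use \eqref{eq:def-h}: since $\delta_x Q_{t}\mathbf 1_E\ge h(x)\,\|Q_{t-1}\mathbf 1_E\|_\infty$ for $t\ge1$, taking $t=T-s$ gives the $\mathcal F_s$-measurable lower bound $\hat m_s Q_1\psi=\hat m_s Q_{T-s}\mathbf 1_E\ge\hat m_s(h)\,\|\psi\|_\infty$, so the factor $\|\psi\|_\infty$ will cancel against the size of the numerator fluctuations and leave the $1/\hat m_s(h)$ in the bound.

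The remaining, and main, point is the one-step fluctuation estimate: conditionally on $\mathcal F_s$, the transported empirical measure $\hat m_{s+1}(\varphi)$ should be close in conditional $L^2$ to $\hat m_s Q_1\varphi$, with error of order $\|\varphi\|_\infty/\sqrt{N_s}$ for $\varphi$ dominated by $\psi$. Here I would invoke the many-to-one structure of the BBMMI, the identity \eqref{M21-intro} and its one-step version from \cite{CHV2024}, to identify the conditional mean of $\hat m_{s+1}(\varphi)$ and then estimate its conditional variance. This is where the Moran-type interactions are the genuine obstacle: the resampling and selection events couple the particles, so the contributions to $\hat m_{s+1}(\varphi)-\hat m_s Q_1\varphi$ are not conditionally independent, and one must show that these interactions, compensated by the weights $\Pi^A,\Pi^B$ and constrained by $N_{min}\le N_s\le N_{max}$, produce fluctuations of the same order $N_s^{-1/2}$ rather than larger.

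Combining the conditional variance bound of order $\alpha_{T-s-1}(f)^2\|\psi\|_\infty^2/N_s$ with the $\mathcal F_s$-measurable denominator bound, so that $\|\psi\|_\infty$ cancels and the factor $1/(\sqrt{N_s}\,\hat m_s(h))$ can be extracted, and then taking expectations, yields the per-step estimate and hence \eqref{eq:main-result}. Throughout, Assumption~1 keeps the one-step semigroup and the weights bounded, Assumption~2 guarantees $h>0$ so that the denominator control is non-degenerate, and Assumption~3 ensures the dynamics are well defined at all times.
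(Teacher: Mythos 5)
Your overall architecture matches the paper's: telescope over integer times via $\Phi_s=\hat m_sQ_{T-s}f/\hat m_sQ_{T-s}\mathbf 1_E$, apply Minkowski's inequality and the Markov property at each time $s$, reduce to a one-unit-of-time $L^2$ estimate imported from \cite{CHV2024}, centre $f$ by $\nu_{T-s-1}(f)$ to produce the factor $\alpha_{T-s-1}(f)$, and lower-bound the denominator by $\hat m_s(h)\,\|Q_{T-s-1}\mathbf 1_E\|_\infty$ using the definition of $h$. All of these steps are the same as in the paper and are handled correctly.

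There is, however, a genuine flaw in your per-step decomposition. You compare $\hat m_{s+1}(\varphi)$ directly with $\hat m_sQ_1\varphi$ and assert a conditional $L^2$ error of order $\|\varphi\|_\infty/\sqrt{N_s}$. This is false in general, because $Q_1$ is not mass-conservative: $\hat m_{s+1}$ is a probability measure, while $\hat m_sQ_1\mathbf 1_E$ carries the factor $\exp(\int(b-\kappa))$ and can differ from $1$ by a constant. The one-step many-to-one identity of \cite{CHV2024} reads $\mathbb E[\Pi_1^A\Pi_1^B\,m_{s+1}(\varphi)]=m_sQ_1\varphi$, so what concentrates at scale $\sqrt{N_s}$ is the \emph{weighted unnormalised} measure $\Pi_1^A\Pi_1^B m_{s+1}(\varphi)$ around $m_sQ_1\varphi$; consequently $\hat m_{s+1}(\varphi)$ concentrates around $\hat m_sQ_1\varphi/\hat m_sQ_1\mathbf 1_E$, not around $\hat m_sQ_1\varphi$. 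In your identity $\frac{A}{B}-\frac{C}{D}=\frac{A-C}{D}+\frac{A}{B}\cdot\frac{D-B}{D}$ the two terms are therefore each of order $\alpha_{T-s-1}(f)\,\bigl|1-1/\hat m_sQ_1\mathbf 1_E\bigr|=O(\alpha_{T-s-1}(f))$ rather than $O(\alpha_{T-s-1}(f)/\sqrt{N_s})$, and the claimed per-step bound only emerges from a cancellation between them that your argument does not exploit; bounding each term separately, as you propose, yields $\sum_s\alpha_{T-s-1}(f)$ with no factor $N_s^{-1/2}$, which is too weak. The fix is the paper's route: express the difference of ratios through $\Pi_1^A\Pi_1^Bm_{s+1}Q_{T-s-1}f-m_sQ_{T-s}f$ and $\Pi_1^A\Pi_1^Bm_{s+1}Q_{T-s-1}\mathbf 1_E-m_sQ_{T-s}\mathbf 1_E$ (the weights and population sizes cancel inside the ratio $m_{s+1}Q_{T-s-1}f/m_{s+1}Q_{T-s-1}\mathbf 1_E$), apply the $L^2$ bound of Theorem~2.6 of \cite{CHV2024} to the function $Q_{T-s-1}f$, and only then divide by $m_sQ_{T-s}\mathbf 1_E$. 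With that repair, your remaining steps (centering, the use of $h$, and the summation) go through unchanged.
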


\medskip

 %introduce, for all $0\leq s\leq T-1$, 
%\begin{align*}
%\alpha_{s,T}(f):=\E\left(2\wedge\frac{\|Q_{T-s-1} \mathbf 1_E\|_{\infty}\sup_{\mu\in\mathcal M_1(E)}\left|\frac{\mu Q_{T-s-1} f}{\mu Q_{T-s-1}\mathbf 1_E}\right|}{\sqrt{N_s}\hat m_sQ_{T-s}\mathbf 1_E}\right).
%\end{align*}
% and 
\begin{rem}
    After the proof we will consider several examples where $\alpha_t$ is bounded by a constant, as well as situations where it decreases exponentially fast with $t$, allowing us to make use of well-known results in the theory of quasi-stationary distribution. In these situations, the right-hand side of \eqref{eq:main-result} is typically linear in $T$ and uniformly bounded over $T\geq 0$ respectively.  

    However, there are situations where $\alpha_t$ may decrease more slowly, for instance in reducible state spaces or in time inhomogeneous settings. Note that these settings are also covered by our result since $\nu_t$ is allowed to depend on $t$.
\end{rem}

\begin{proof}
   We first prove 
   that, for all $\|f\|_{\infty}\leq 1$,
\begin{align}
   \label{eq:step1}
    \left\|\frac{\hat m_0Q_T f}{\hat m_0 Q_T\mathbf{1}_E}- \frac{\hat m_1 Q_{T-1} f}{\hat m_1 Q_{T-1}\mathbf{1}_E}\right\|_2
    \leq C\frac{\|Q_{T-1} \mathbf 1_E\|_{\infty}\sup_{\mu\in\mathcal M_1(E)}\left|\frac{\mu Q_{T-1} f}{\mu Q_{T-1}\mathbf 1_E}\right|}{\sqrt{N_0} \hat m_0Q_T\mathbf 1_E}.
\end{align}
This is obtained 
via a modification of the end of the proof of Theorem~2.6 in~\cite{CHV2024}.  Denoting by $A_t$ the total number of resampling events up to time $t$, by $B_t$ the total number of selection events up to time $t$, and setting
\begin{equation}
\Pi_t^A \coloneqq \prod_{n=1}^{A_t}\left(\frac{N_{min}-1}{N_{min}}\right), \qquad \Pi_t^B \coloneqq \prod_{i=1}^{B_t}\left(\frac{N_{max}+1}{N_{max}}\right),\label{Pi}
\end{equation}
the authors obtain therein that
\[
\E\left[\left(m_0Q_1 f - \Pi_1^A\Pi_1^B m_1 f\right)^2\right]\leq  c_1 N_0  \exp\left(c_2 \|b\|_\infty\right) \left(\sup_{t\in[0,1]} \|Q_t f\|_{\infty}\right)^2,
\]
for some constants $c_1$ and $c_2$. From there, applying this result to $f=Q_{T-1} f$, one deduces that for $T \ge 1$,
\begin{align*}
    &\left\| m_0 Q_T\mathbf{1}_E \, \frac{m_1 Q_{T-1} f}{m_1 Q_{T-1}\mathbf{1}_E} -m_0Q_T f \right\|_2 \\
    &\hspace{2cm}\leq \left\|(m_0 Q_T\mathbf{1}_E-\Pi_1^A\Pi_1^B m_1Q_{T-1}\mathbf{1}_E)\,\frac{m_1 Q_{T-1} f}{m_1Q_{T-1}\mathbf{1}_E}\right\|_2+\left\|\Pi_1^A\Pi_1^B m_1 Q_{T-1} f-m_0Q_T f\right\|_2\\
    &\hspace{2cm}\leq \sqrt{c_1 N_0}  \exp\left(c_2 \|b\|_\infty/2\right) \left(\sup_{t\in[0,1]} \|Q_{t+T-1} \mathbf 1_E\|_{\infty}\sup_{\mu\in\mathcal M_1(E)}\left|\frac{\mu Q_{T-1} f}{\mu Q_{T-1}\mathbf 1_E}\right|+\sup_{t\in[0,1]} \|Q_{t+T-1} f\|_{\infty}\right).
\end{align*} 
We conclude that
\begin{multline*}
    \left\|\frac{\hat m_0Q_T f}{\hat m_0 Q_T\mathbf{1}_E}- \frac{\hat m_1 Q_{T-1} f}{\hat m_1 Q_{T-1}\mathbf{1}_E}\right\|_2\\ 
    \leq C\frac{\sup_{t\in[0,1]} \|Q_{t+T-1} \mathbf 1_E\|_{\infty}\sup_{\mu\in\mathcal M_1(E)}\left|\frac{\mu Q_{T-1} f}{\mu Q_{T-1}\mathbf 1_E}\right|+\sup_{t\in[0,1]} \|Q_{t+T-1} f\|_{\infty}}{\sqrt{N_0} \hat m_0Q_T\mathbf 1_E}.
\end{multline*}
Now note that, for all $\mu\in\mathcal M_1(E)$,
\begin{align*}
    \mu  Q_{t+T-1} f\leq \frac{\mu Q_{t+T-1} f}{\mu Q_{t+T-1} \mathbf 1_E}  \|Q_{t+T-1} \mathbf 1_E\|_{\infty}
\end{align*}
Then, since $b$ is assumed bounded, there exists a constant $C>0$ such that
\begin{align*}
    \sup_{t\in[0,1]} \|Q_{t+T-1} \mathbf 1_E\|_{\infty}\leq C \|Q_{T-1} \mathbf 1_E\|_{\infty}.
\end{align*}
Using the last two estimates in the antepenultimate inequality, we deduce that~\eqref{eq:step1} holds true.

Then, using Minkowski's inequality, the Markov property at each time $s\in \{1,2,\ldots,T-1\}$ and the first step of the proof, we obtain
\begin{align}
    \left\|\frac{\hat m_0Q_T f}{\hat m_0 Q_T\mathbf{1}_E}- \frac{\hat m_T(f)}{\hat m_T(\mathbf{1}_E)}\right\|_2
    &\leq \sum_{s=0}^{T-1} \left\|\frac{\hat m_sQ_{T-s} f}{\hat m_s Q_{T-s}\mathbf{1}_E}- \frac{\hat m_{s+1}Q_{T-s-1}f}{\hat m_{s+1}Q_{T-s-1}\mathbf{1}_E}\right\|_2\nonumber\\
    &\leq C \sum_{s=0}^{T-1}  \mathbb E\left(2\wedge \frac{\|Q_{T-s-1} \mathbf 1_E\|_{\infty}\sup_{\mu\in\mathcal M_1(E)}\left|\frac{\mu Q_{T-s-1} f}{\mu Q_{T-s-1}\mathbf 1_E}\right|}{\sqrt{N_0}\hat m_0Q_{T-s}\mathbf 1_E}\right),\label{eq:best-bound}
\end{align}
for some constant $C>0$.  Replacing $f$ by $f-\nu_t(f)$  and using the definitions of $h$ and $\alpha  $ yields the result.
\end{proof}

\begin{rem}
When the function $h$ defined in~\eqref{eq:def-h} is bounded away from $0$, then Theorem~\ref{thm:main-result} entails that there exists a constant $C>0$ such that, for all bounded measurable function $f$ and all time $T>0$,
\begin{align}
\label{eq:linconv}
    \left\|\frac{\hat m_0 Q_T f}{\hat m_0 Q_T \mathbf{1}_E} - \hat m_T(f)\right\|_2
    &\leq C \frac{T}{\sqrt{N_{min}}} \|f\|_\infty.
\end{align}
A similar estimate appeared in 
\cite[Proposition~9.5.6]{DelMoral2013}, where the particle system evolves in discrete time with a constant number of particle, under the same assumption on $h$ (transposed to discrete time).

If, in addition, $(\alpha_t(f))_{t\in\{1,2,\ldots\}}$ is summable, then there exists a constant $C_f>0$ such that, for all $T>0$,
\begin{align}
\label{eq:unifconv}
    \left\|\frac{\hat m_0 Q_T f}{\hat m_0 Q_T \mathbf{1}_E} - \hat m_T(f)\right\|_2
    &\leq \frac{C_f}{\sqrt{N_{min}}}.
\end{align}    
\end{rem}

In the rest of this section, we provide examples that illustrate typical situations where this holds true.

%This condition can for instance be proved using Harnack inequalities (see e.g. ....), including in the context of time inhomogeneous Markov processes.
%It also holds true when the semigroup $Q$ admits a bounded right eigenfunction $\varphi:E\to\mathbb R$ bounded from below. The existence of such an eigenfunction can be obtain using spectral theory (see e.g.~\cite{Rousset2006} in a similar context) or quasi-stationary distribution theory (see e.g.~\cite{ChampagnatVillemonais2016b} [CSV2024] + Del Mo).

\begin{example}
[Uniform exponential convergence with bounded soft killing rate] 
In~\cite{ChampagnatVillemonais2016b}, it has been proved that there exists a probability measure $\nu_{QS}$ on $E$, constants $C>0$ and $\alpha>0$ such that
\begin{align*}
    \left\|\frac{\mu Q_t}{\mu Q_t \ind_E}-\nu_{QS}\right\|_{TV}\leq Ce^{-\alpha t},\ \forall t\geq 0,\ \forall \mu\in \mathcal M_1(D),
\end{align*}
if, and only if, there exist constants $c_1,c_2,t_0>0$ and $\nu\in \mathcal M_1(D)$ such the two following conditions are satisfied:
\begin{align*}
    &\text{(A1).}\qquad \frac{\delta_x Q_{t_0}}{\delta_x Q_{t_0}\ind_E}\geq c_1\, \nu,\ \forall x\in E,\\
    &\text{(A2).}\qquad \inf_{t\geq 0,\, x\in E}\frac{\nu Q_t\ind_E}{\delta_x Q_t\ind_E}\geq c_2.
\end{align*}
If, in addition, $\inf_{x\in E} \delta_x Q_{t_0}\ind_E>0$, then this implies that $h$ is uniformly bounded from below and that, for all measurable function $f$, we have $\alpha_t(f)\leq Ce^{-\alpha t}\|f\|_\infty$, so that the uniform convergence~\eqref{eq:unifconv} holds true with $C_f=C\|f\|_\infty$. 

The conditions (A1) and (A2) are known to hold true in several situations (see e.g.~\cite{DelMoralMiclo2002,DelMoral2004,DelMoral2013,ChampagnatVillemonais2016b}).  In addition, they are easily generalised to the time-inhomogeneous setting, in which case the uniform convergence result follows by taking a time dependent measure in the definition of $\alpha$.

In the particular Moran model setting (that is when $N_{min}=N_{max}$), this uniform convergence result was already known (see e.g.~\cite{Rousset2006,DelMoral2013,CloezCorujo2021}) under additional regularity conditions involving the infinitesimal generator and the \textit{carré du champs} operator associated to $(Q_t)_{t\geq 0}$. Our contribution is thus that the uniform convergence holds in a more general setting and without these regularity conditions.
\end{example}

\begin{example}
[Wasserstein distance]
Consider the case where $E$ is endowed with a bounded metric $d$. Assume that $b-\kappa$ is Lipschitz and that the following assumption holds:
\begin{itemize}
    \item[(A)] There exist constants $C, \gamma > 0$ such that, for all $x, y \in E$ and $t \ge 0$, there exists a Markovian coupling\footnote{For all $x,y \in E$, a coupling measure between $\mathbb P_x$ and $\mathbb P_y$ is a probability measure $\mathbb P_{(x,y)}$ on a probability space where $(X_t^x, X_t^y)_{t \ge 0}$ is defined, such that $(X_t^z)_{t \ge 0}$ has the same distribution as $(X_t)_{t \ge 0}$ under $\mathbb P_z$ for $z = x, y$. We say the coupling is Markovian if the coupled process $(X, Y)$ is Markovian with respect to its natural filtration.}
    \[
      \mathbb E_{(x, y)}[G_t d(X_t^x, X_t^y)] \le C {\rm e}^{\gamma t}d(x, y),
    \]
    where $G_t^x = {\rm e}^{-\int_0^t \beta(X_s^x) - \kappa(X_s^x) {\rm d}s}/ \mathbb E_x [{\rm e}^{-\int_0^t \beta(X_s^x) - \kappa(X_s^x) {\rm d}s}]$ and $X_t^x$ denotes $X_t$ under $\mathbb P_x$.
\end{itemize}
Under these assumptions, it was proved in \cite{CSV2023} that there exists a probability measure $\nu_{QS}$ such that for all Lipschitz function $f:E\to\mathbb R$, we have
\begin{align}
    \left\|\frac{\mu Q_t f}{\mu Q_t \ind_E}-\nu_{QS}(f)\right\|_{Lip}\leq Ce^{-\alpha t}W_d(\mu,\nu_{QS})\|f\|_{Lip},\ \forall t\geq 0,\ \forall \mu\in \mathcal M_1(D),
\end{align}
where, for $\mu, \nu \in \mathcal M_1(D)$, $W_d(\mu, \nu) = \sup_{\|f\|_{Lip} \le 1}|\mu(f) - \nu(f)|$.

In addition, the authors show that $h$ is lower bounded in this case.
We refer the reader to \cite{CSV2023} for example of processes satisfying this condition. 

Under these conditions, we thus obtain that, for all bounded measurable function $f$,~\eqref{eq:linconv} holds true, and that, for all Lipschitz function $f$,~\eqref{eq:unifconv} holds true with $C_f=C\,\|f\|_{Lip}$. As far as we know, this result is new, even for the simpler Moran Model (i.e. $N_{min}=N_{max}$).
\end{example}

\begin{example}
[Counter-example to linear convergence when $h=0$ on some subset]
We consider the case where $E=\{a,b\}$, $a\neq b$, $X$ is the constant continuous Markov chain ($X_t=X_0$ for all $t\geq 0$, almost surely), $\kappa=\ind_{a}+2\,\ind_{b}$, and $b=0$.
For any $N\geq 2$, define the probability measure $\mu_N=\frac1N\delta_a+\frac{N-1}{N}\delta_b$, so that
\[
\frac{\mu_N Q_t\ind_a}{\mu_N Q_t\ind_E}=\frac{\frac1N\,e^{-t}}{\frac1N\,e^{-t}+\frac{N-1}{N}\,e^{-2t}}.
\]
In particular, for $t=2\ln N$,
\[
\frac{\mu_N Q_t\ind_a}{\mu_N Q_t\ind_E}= \frac{\frac1{N^3}}{\frac1{N^3}+\frac{N-1}{N}\,\frac1{N^4}}\geq \frac{1}{1+\frac1N}.
\]
Now consider the $N_{min}-N_{max}$ model with $N_0=N_{min}=N_{max}$, the same parameters $\kappa$ and $b$, and $X^1_0=a$ and $X^i_0=b$ for all $i\geq 2$. Then, with probability $1/3$, during the first event involving $X^1$, the particle jumps to $b$ and there are $0$ particle at $a$, so that $\hat m_t(a)=0$ after this event. Since this event happens with rate $3$, we deduce that
\[
\mathbb E(\hat m_t(a))\leq e^{-3t}+(1-e^{-3t})\frac23,
\]
and hence, taking again $t=2\ln N$,
\[
\mathbb E(\hat m_t(a))\leq \frac{1}{N^6}+\left(1-\frac{1}{N^6}\right)\frac23.
\]
This shows that
\[
\mathbb E\left(\left|\frac{\mu_N Q_t\ind_a}{\mu_N Q_t\ind_E}-\hat m_t(a)\right|^2\right)\geq \frac{1}{1+\frac1N}-\frac{1}{N^6}+(1-\frac{1}{N^6})\frac23\xrightarrow[N\to+\infty]{} \frac{1}{3},
\]
and hence that~\eqref{eq:linconv} does not hold true in this setting.
\end{example}

\begin{example}
[Counter-example to uniform when no uniform convergence of semi-group]

We consider as in the previous example  the case where $E=\{a,b\}$, $a\neq b$, and $X$ is the constant continuous time Markov chain ($X_t=X_0$ for all $t\geq 0$ almost surely), but with $\kappa=\ind_{a}+\ind_{b}$, and $b=0$. In this case, for any even number $N\geq 2$, we define the probability measure $\mu_N=\frac{1}2\delta_a+\frac{1}{2}\delta_b$, so that
\[
\frac{\mu_N Q_t\ind_a}{\mu_N Q_t\ind_E}=\frac12.
\]
Now consider the $N_{min}-N_{max}$ model with $N_0=N_{min}=N_{max} = N$, the same parameters $\kappa$ and $b$, and $X^{i}_0=a$ for all $i\leq N/2$ and $X^i_0=b$ for all $i\geq N/2+1$. Since the two sets do not communicate with each other, we deduce that there exists $T_N\geq 0$ such that $\mathbb P(\hat m_{T_N}(a)=0)\geq 1/4$. We thus deduce that
\begin{align*}
    \mathbb E\left(\left|\frac{\mu_N Q_{T_N}\ind_a}{\mu_N Q_{T_N}\ind_E}-\hat m_{T_N}(a)\right|^2\right)\geq \frac{1}{4}.
\end{align*}
This shows that, even though $\inf_E h>0$,~\eqref{eq:unifconv} does not hold true.
\end{example}

\section{Brownian motion with drift on a bounded $C^2$ domain.}\label{sec:BMdrift}

In the previous section, we applied our main result to situations where $h$ was uniformly bounded from below. In this section, we consider the more challenging case of a Brownian motion with drift, killed at the boundary of a $C^2$-domain. The Fleming-Viot-type particle system with this dynamic has been studied: it is known that, in this case, the empirical distribution converges uniformly in time toward the associated Feynman-Kac semigroup (see e.g. \cite{DelmoVillemonais2018}, and \cite{JM2022} when the diffusion parameter is sufficiently small) at rate $C/N^\eta$ for some $\eta\in(0,1/2)$. The purpose of this section is two-fold: to prove that the $L^2$ distance can be (optimally) bounded by $C/\sqrt{N_{min}}$, and to extend the uniform convergence obtained in the Fleming-Viot setting to the more general framework of the $N_{min}-N_{max}$ particle system.

We consider the situation where $X$ is a solution to the SDE
\[
dX_t=dW_t+q(X_t)\,\mathrm dt,\ X_0\in D,
\]
where $D$ is a bounded domain in $\mathbb R^d$, $d\geq 2$, with $C^2$ boundary, $W$ is a standard $d$-dimensional Brownian motion, and $q:D\to \mathbb R^d$ is bounded and continuous. We assume that $b$ and $\kappa$ are bounded and that the process is killed upon hitting the boundary at time $\tau_\partial=\inf\{t\geq 0,\,X_t\notin D\}$.

\begin{thm}
Assumptions~1,~2 and~3 hold true. In addition, there exists a constant $C>0$ such that, for all bounded measurable functions $f:D\to\mathbb R$,
\begin{align}
\label{eq:thmBM}
    \left\|\frac{\hat m_0Q_T f}{\hat m_0 Q_T\mathbf{1}_E}- \hat m_T(f)\right\|_2
    &\leq  \frac{C \sqrt{N_{max}}}{N_{min}}\|f\|_\infty+\mathbb E\left(\frac{C}{\sqrt{N_{0}}\,\hat m_0(\rho_D)}\right)\|f\|_\infty,\ \forall t\geq 0,
\end{align}
where $\rho_D:D\to\mathbb R_+$ denotes the distance to the boundary of $D$.
\end{thm}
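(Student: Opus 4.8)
The plan is to read the statement off Theorem~\ref{thm:main-result}, whose right-hand side involves only the two model-dependent quantities $\alpha_t$ and $h$ from \eqref{eq:def-alpha} and \eqref{eq:def-h}, and to control both in the present diffusive setting. First I would verify Assumptions~1--3: boundedness of $b$ is assumed; Assumption~2 holds because the exit time $\tau_\partial$ of a nondegenerate diffusion from a bounded $C^2$ domain has no atoms and the survival probability over a fixed horizon is uniformly positive; and Assumption~3 holds because $b$ and $\kappa$ are bounded, so the branching and killing events cannot accumulate. Taking as reference measure $\nu_t\equiv\nu_{QS}$, the quasi-stationary distribution of the killed diffusion, I would then apply \eqref{eq:main-result} and split its sum into the single term $s=0$ and the terms $s\ge1$; the first will yield the initial-data contribution $\mathbb E(C/(\sqrt{N_0}\,\hat m_0(\rho_D)))\|f\|_\infty$ and the second the uniform contribution $C\sqrt{N_{max}}\,\|f\|_\infty/N_{min}$.

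Two semigroup inputs feed this. The first is exponential ergodicity of the conditioned process, $\alpha_t(f)\le C\mathrm e^{-\alpha t}\|f\|_\infty$: for a diffusion killed at a smooth boundary this follows from the spectral gap of the killed generator, equivalently from conditions of the type (A1)--(A2) discussed above, and I would invoke the corresponding references. The second is the comparison $h(x)\ge c\,\rho_D(x)$ on $D$. Here I would use the classical boundary behaviour of the killed diffusion: near a $C^2$ boundary the weighted survival probability $\delta_xQ_t\ind_E$ is comparable, uniformly in $t\ge1$, to the principal eigenfunction of the killed generator, which by the Hopf lemma and boundary Harnack estimates is itself comparable to $\rho_D$. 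After normalisation by $\|Q_{t-1}\ind_E\|_\infty$ as in \eqref{eq:def-h}, and taking the infimum over $t\ge1$, this gives $h\ge c\,\rho_D$.

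The heart of the proof, and the step I expect to be the main obstacle, is the uniform negative-moment estimate
\[
\mathbb E\!\left(\frac{1}{\sqrt{N_s}\,\hat m_s(h)}\right)\le \frac{C\sqrt{N_{max}}}{N_{min}},\qquad s\ge1,
\]
valid uniformly over the initial configuration. Since $N_{min}\le N_s\le N_{max}$ and $h\ge c\,\rho_D$, one has $1/(\sqrt{N_s}\hat m_s(h))\le\sqrt{N_{max}}/(c\sum_i\rho_D(X_s^i))$, so it suffices to prove $\mathbb E(1/\sum_i\rho_D(X_s^i))\le C/N_{min}$; by the Markov property at time $s-1$ this reduces to bounding the reciprocal moment after a single unit of time, uniformly over \emph{all} configurations with at least $N_{min}$ particles, including those crowded against the boundary, and it is this uniformity that is delicate. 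The mechanism I would exploit is that the resampling rule keeps at least $N_{min}$ particles alive by duplicating survivors: since a lineage conditioned to avoid the boundary over a unit interval is pushed into the interior, after time one a definite fraction of the $N_{min}$ particles sits at distance $\ge\epsilon$ from $\partial D$ with overwhelming probability, no matter how crowded the start. Note that the first-moment identity \eqref{M21-intro} alone does not deliver this, because for a crowded start the weight $\Pi^A$ becomes small precisely when many resampling events occur; one must instead track the empirical distance-to-boundary distribution under the true (unweighted) dynamics. I would make this quantitative by comparing the distances $\rho_D(X^i)$ to an auxiliary one-dimensional diffusion and combining a concentration estimate for the number of interior particles with a careful treatment of the rare event on which $\sum_i\rho_D$ is nonetheless small, where the reciprocal is controlled using the $\rho_D$-decay of the conditioned transition density near $\partial D$. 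The analogous estimate for the Fleming--Viot system ($N_{min}=N_{max}$) is available in \cite{DelmoVillemonais2018}, and the task is to transfer it to the variable population size.

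Finally I would assemble the pieces. For $s=0$ the corresponding term of \eqref{eq:main-result}, bounded using $\alpha_{T-1}(f)\le 2\|f\|_\infty$ and $h\ge c\rho_D$, gives $C\|f\|_\infty/(\sqrt{N_0}\,\hat m_0(\rho_D))$, which is the second term of \eqref{eq:thmBM} (the expectation being trivial as $\hat m_0$ is deterministic). For $s\ge1$, inserting the exponential decay $\alpha_{T-s-1}(f)\le C\mathrm e^{-\alpha(T-s-1)}\|f\|_\infty$ together with the negative-moment estimate, and summing the geometric series $\sum_{s\ge1}\mathrm e^{-\alpha(T-s-1)}\le(1-\mathrm e^{-\alpha})^{-1}$, yields $C\sqrt{N_{max}}\,\|f\|_\infty/N_{min}$, uniformly in $T$. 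Adding the two contributions gives \eqref{eq:thmBM}.
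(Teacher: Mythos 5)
Your overall architecture matches the paper's: verify Assumptions 1--3, feed Theorem~\ref{thm:main-result} with the lower bound $h\ge c\,\rho_D$ (which the paper takes from \cite{CC-PV2018}) and the exponential decay $\alpha_t(f)\le C\e^{-\gamma t}\|f\|_\infty$, isolate the $s=0$ term as the $\hat m_0(\rho_D)$ contribution, and sum a geometric series over $s\ge 1$. You have also correctly identified where the real difficulty lies: the uniform negative-moment estimate $\mathbb E\bigl(1/\sum_i\rho_D(X^i_s)\bigr)\le C/N_{min}$ for $s\ge 1$, uniformly over initial configurations crowded against the boundary. The problem is that you do not actually prove it: you describe a strategy (``concentration estimate for the number of interior particles'' plus ``careful treatment of the rare event'') and defer to the Fleming--Viot case in \cite{DelmoVillemonais2018}, noting that ``the task is to transfer it to the variable population size.'' That transfer is precisely the content of the proof. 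The paper's mechanism is an explicit coupling: it attaches to $N_{min}$ of the particles a family of \emph{independent} reflected Brownian motions with drift $-\|r\|_\infty$ on $[0,a]$ that jump to $0$ at rate $\|b\|_\infty+\|\kappa\|_\infty$ and at each selection/killing/resampling event, arranged so that $\sum_{i=1}^{N_t}\rho_D(X^i_t)\ge\sum_{i=1}^{N_{min}}R^i_t$ pathwise. Independence then turns $\mathbb E\bigl(1/\sum_i R^i_1\bigr)$ into $\int_0^\infty \mathcal L(t)^{N_{min}}\,\D t$ for the Laplace transform $\mathcal L$ of $R^1_1$, and the bound $\mathcal L(t)\le 1/(1+ct)$ gives $C/N_{min}$ directly (Lemma~\ref{lem:useful1}). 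Your concentration-based sketch does not obviously deliver this, because controlling $\mathbb E(1/\Sigma)$ requires control of the lower tail of $\Sigma=\sum_i\rho_D(X^i_1)$ all the way down to $0$, not just a high-probability lower bound; the independence supplied by the coupling is what makes the lower tail computable.

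A second, smaller gap: your verification of Assumption~3 (``$b$ and $\kappa$ are bounded, so events cannot accumulate'') is insufficient. Boundedness of the rates controls branching and soft-killing events, but hard-killing events occur when particles hit $\partial D$, and a configuration accumulating at the boundary could in principle generate infinitely many hard kills and resamplings in finite time. The paper rules this out using the same coupling: an accumulation of event times would force $\sum_i\rho_D(X^i)$, hence $\sum_i R^i$, to accumulate at $0$ in finite time, which is impossible for independent reflected Brownian motions. So the coupling is not only the key to the moment estimate but also to checking that the particle system is well defined; without it your proof has a hole in both places.
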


\begin{proof}
Thanks to equation (3.6) and (A1--A2) of~\cite{CC-PV2018} (see also Remark~3 therein), there exists a constant $c_0>0$ such that for all $t\geq 1$ and $x \in D$,
\begin{align*}
   \frac{\delta_x Q_t\ind_D}{\sup_{y\in D}\delta_y Q_{t-1}\ind_D} \geq c_0\rho_{D}(x),
\end{align*}
where $\rho_D$ is the distance to the boundary of $D$. This implies that, for all $t\geq 0$,
\begin{equation}
\label{eq:step1-BM}
\mathbb E\left(\frac{1}{\sqrt{N_t}\hat m_t(h)}\right)\leq  \mathbb E\left(\frac{C\sqrt{N_t}}{\sum_{i=1}^{N_t} \rho_D(X^i_t)}\right)\leq \mathbb E\left(\frac{C\sqrt{N_{max}}}{\sum_{i=1}^{N_t} \rho_D(X^i_t)}\right).
\end{equation}

It is known that the distance to the boundary for the Fleming-Viot-type system can be stochastically coupled with a system of Brownian motions with drift on $[0,a]$ for some $a>0$, reflected at $0$ and $a$ (see e.g.~\cite{Villemonais2011}). From this coupling for the Fleming-Viot particle system, we derive a new coupling for the $N_{min}-N_{max}$ model.

More precisely, let $a>0$ and $D_a=\{x\in D,\ \rho_D(x)\leq a\}$ such that the distance to the boundary is $C^2$ in $D_a$ (such a vicinity of the boundary exists since the boundary is assumed to be of regularity $C^2$). Then, when a particle $X^i$ is in $D_a$, using the fact that $\Vert\nabla \rho_D\Vert_2 = 1$, Itô's formula yields
\begin{align}\label{eq:SDEdist}
    \mathrm d\rho_D(X^i_t)=\mathrm dB^i_t+r(X^i_t)\,\mathrm dt, \ i \in \{1, \dots, N_t\},
\end{align}
for some independent Brownian motions $B^i$, $i = 1, \dots, N_t$, and some bounded continuous function $r$, up to the next branching or selection or killing or resampling event. 

We now construct a family of jumping reflected Brownian motions with drift on $[0, a]$ in such a way that the sum of the positions of these particles is always bounded above by the sum of the distance between a set of $N_{min}$  particles (in the $N_{min} - N_{max}$ process) and the boundary. The construction of such a process follows similar ideas to those presented in~\cite{Villemonais2011} and so we leave the details of the construction to the reader.
More precisely, for $i \in \{1, \dots, N_{min}\}$, between events, particles move according to the following dynamics,
\begin{align*}
    \mathrm dR^i_t=\mathrm dB^i_t-\|r\|_\infty \mathrm dt+\mathrm dL^{i,0}_t-\mathrm dL^{i,a}_t, R^i_0=0,\ i \in\{1,\ldots,N_{min}\},
\end{align*}
where $L^x$ denotes the local time of $R^{i}$ at $x \in \{0, a\}$, where, for each index $i \in \{1, \dots, N_{min}\}$, $B^i$ is the same Brownian motion as in \eqref{eq:SDEdist}, and where the processes jump to $0$ at rate $\|b\|_\infty+\|\kappa\|_\infty$ independently from each other. In addition, the  $R^i$ are required to jump according to the following rules.
%corresponds to the same
% .  We define this process so that the sum of the $R^i$ remain below the 
% More precisely, in between events, the process with index $i$  evolves using the same Brownian motion $B^i$ as the distance to the boundary of its associated particle (this is identical to~[V2011] and so we do not detail the construction). 

\begin{itemize}
    \item When a particle $X_t^i$, $i = 1, \dots, N_t$, branches and does not trigger a selection event, we do nothing (so the set of reflected brownian motion does not branch).
    \item When a particle branches and triggers a selection event, 
    \begin{itemize}
        \item if the selection event removes a particle associated to a reflected Brownian motion $R^i$, $i = 1, \dots, N_{min}$, this Brownian motion jumps to $0$ and is associated to the particle newly created (at the branching event);
        \item if the selection event removes a particle which is not associated to a reflected Brownian motion, we do nothing.
    \end{itemize}
    \item When a particle is killed and there is no resampling,
    \begin{itemize}
        \item if the particle is not associated to a reflected Brownian motion, we do nothing;
        \item if the particle is associated to a reflected Brownian motion, then the Brownian motion jumps to $0$ and is associated to a new particle, not already associated to a Brownian motion.
    \end{itemize}
    \item When a particle is killed and triggers a resampling event,
    \begin{itemize}
        \item if the particle is not associated to a reflected Brownian motion, we do nothing;
        \item if the particle is associated to a reflected Brownian motion, then the reflected Brownian motion jumps to $0$ (this is only relevant for soft killing, since in the case of hard killing the associated reflected Brownian motion is already at $0$) and is now associated to the newly created particle.
    \end{itemize}
\end{itemize}

The point of this coupling is that it has the following properties:
\begin{align*}
    \sum_{i=1}^{N_t} \rho_D(X^i_t)\geq \sum_{i=1}^{N_{min}} R^i_t,\ \forall t\geq 0,
\end{align*}
while the $R^i$ are independent (the proof is very similar to the one developed in~\cite{Villemonais2011} and we leave the details to the reader).

We first remark that, on the event $\lim_{n\to+\infty}\tau_n<+\infty$,  the distance to the boundary of the particle system accumulates to $0$ in finite time, and hence that  the set of processes $R^i$, $i\in\{1,\ldots,N_{min}\}$ accumulates to $0$. Since this is not possible (by independence of the processes $R^i$), we deduce that Assumption~3 holds true. Assumption~2 also holds true since the hitting time of an elliptic diffusion has no atom.  Hence the hypotheses of Theorem~\ref{thm:main-result} are satisfied.

We now make use of the following lemma, proved at the end of this section.
\begin{lem}
    \label{lem:useful1}
    We have
\begin{align*}
    \mathbb E\left(\frac{1}{\sum_{i=1}^{N_{min}} R^i_1}\right)\leq \frac{C}{N_{min}}.
\end{align*}
for some constant $C>0$.
\end{lem}

Using~\eqref{eq:step1} and since, according to~\cite{CC-PV2018}, we have $\alpha_t(f)\leq Ce^{-\gamma t}$ for some $C,\alpha>0$, we deduce from Theorem~\ref{thm:main-result} that~\eqref{eq:thmBM} holds true.
\end{proof}

\begin{proof}[Proof of Lemma~\ref{lem:useful1}]
    Assume without loss of generality that $a=1$. Since the random variables $R^i_1$ have a bounded density $f_R$ with respect to the Lebesgue measure on $[0,a]$ and are independent, we have
    \begin{align*}
            \mathbb E\left(\frac{1}{\sum_{i=1}^{N_{min}} R^i_1}\right)
            &=\int_0^\infty \mathcal L(t)^{N_{min}}\,\mathrm dt,
    \end{align*}
    where $\mathcal L(t)$ is the Laplace transform of $R^i_1$, that is
    \begin{align*}
        \mathcal L(t) = \int_0^1 e^{-t x} f_R(x)\,\mathrm dx.
    \end{align*}
Now note that we have
    \begin{align*}
        \limsup_{t\to+\infty} t\,\mathcal L(t) &= \limsup_{t\to+\infty} \int_0^1 t\,e^{-t x} f_R(x)\,\mathrm dx 
        \leq \sup_{[0,1]} f_R\, \lim_{t\to+\infty} \int_0^1 t\,e^{-t x}\,\mathrm dx=\sup_{[0,1]} f_R.
    \end{align*}
    Hence there exist constants $c_\infty, t_\infty>0$ such that, for all $t\geq t_\infty$,
    \begin{align*}
        \mathcal L(t)\leq \frac{1}{1+c_\infty t}.
    \end{align*}
    In addition, the derivative of $\mathcal L(t)$ is given by
    \begin{align*}
        \mathcal L'(t) = - \int_0^1 x \,e^{-t x} f_R(x)\,\mathrm dx,
    \end{align*}
    which is bounded away from $0$ on compact intervals and hence, for any $t_0>0$, there exists a constant $c_0 > 0$ such that, for all $t\in[0,t_0]$,
    \begin{align*}
        \mathcal L(t)\leq \frac{1}{1+c_0 t}.
    \end{align*}
    We deduce that there exists $c>0$ such that, for all $t\geq 0$,
    \begin{align*}
        \mathcal L(t)\leq \frac{1}{1+c t}.
    \end{align*}
    In particular,
     \begin{align*}
        \mathbb E\left(\frac{1}{\sum_{i=1}^{N_{min}} R^i_1}\right)
        &\leq \int_0^\infty \left(\frac{1}{1+c t}\right)^{N_{min}}\,\mathrm dt=\frac{1}{c(N_{min}+1)}.
    \end{align*}
\end{proof}

\section*{Acknowledgements}
AMGC and EH were supported by EPSRC Grant MaThRad EP/W026899/1.

\bibliographystyle{abbrv}
\bibliography{biblio-denis}

\end{document}